\theoremstyle{plain}
\newtheorem{thm}{Theorem}[section]
\theoremstyle{definition}
\newtheorem{defn}[thm]{Definition}
\newtheorem{ex}[thm]{Example}
\numberwithin{equation}{section}
\newcommand{\R}{\mathbb{R}}
\begin{document}

\title[Elliptic equations with functional dependence]{On the solvability of parameter-dependent elliptic functional BVPs on annular-like domains} 

\date{}

\author[A. Calamai]{Alessandro Calamai}
\address{Alessandro Calamai, 
Dipartimento di Ingegneria Civile, Edile e Architettura,
Universit\`{a} Politecnica delle Marche
Via Brecce Bianche
I-60131 Ancona, Italy}%
\email{a.calamai@univpm.it}%

\author[G. Infante]{Gennaro Infante}
\address{Gennaro Infante, Dipartimento di Matematica e Informatica, Universit\`{a} della
Calabria, 87036 Arcavacata di Rende, Cosenza, Italy}%
\email{gennaro.infante@unical.it}%

\begin{abstract} 
We investigate the existence of nontrivial solutions of parameter-dependent elliptic equations with deviated argument in annular-like domains in
$\mathbb{R}^{n}$, with $n\geq 2$, subject to functional boundary conditions.
In particular we consider a boundary value problem that may be used to model heat-flow problems.
We obtain an existence result by means of topological methods; in particular,
we make use of a recent variant in affine cones of the celebrated Birkhoff--Kellogg theorem. Using an ODE argument, we illustrate in an example the applicability of our theoretical result. 
\end{abstract}

\subjclass[2010]{Primary 35J15, secondary 35B09, 35J25, 35J60, 47H10}

\keywords{Nontrivial solutions, nonlocal elliptic equation, deviated argument, spatial delay, functional boundary condition, cone, Birkhoff--Kellogg type theorem}

\maketitle

\section{Introduction}

Our purpose is to study functional boundary value problems (BVPs) associated with elliptic equations in suitable
``annular-like'' domains; in particular we consider the parametric, functional BVP 
\begin{equation}
 \label{bvp-intro}
 \left\{
\begin{array}{ll}
L u=\lambda f(x, u, u_\sigma), & \ \text{in}\ \Omega, \\
u(x)=\psi (x), & \ \text{in}\ \overline \Omega_1, \\
u(x)=\lambda \zeta (x) B[u], & \ \text{on}\ \Gamma_2,
\end{array}
\right.
\end{equation}
where  $\Omega \subset \mathbb{R}^{n}$ with $n\geq 2$,
 $\Omega = \Omega_2 \setminus \overline\Omega_1$ with
$\Omega_1$, $\Omega_2$ bounded domains with sufficiently regular boundary, $\overline \Omega_1 \subset \Omega_2$,  $\partial \Omega_1 = \Gamma_1$, 
$\partial \Omega_2 = \Gamma_2$ and
$\Gamma_1$,  $\Gamma_2$ suitable manifolds. Moreover
 $\lambda$ is a real parameter, $L$ is a strongly uniformly elliptic operator, 
$\psi$ and $\zeta$ are continuous, $B$ is a suitable functional (see Section \ref{sec-set} for details).
Concerning the right-hand side appearing in the differential equation
\begin{equation}
 \label{eq-intro}
L u=\lambda f(x, u, u_\sigma),  \  x\in \Omega,
\end{equation}
by $f$ we mean a real-valued continuous function defined on $\overline \Omega\times\R\times\R$,
where $u_\sigma$ incorporates a deviated argument that can take into account the ``global'' behaviour of $u$, including the ``hole''  $\Omega_1$;
in particular, this setting covers the interesting case of spatial delays.

A motivation for studying this kind of BVPs is that they may occur in physical applications.
We illustrate this fact with the annulus $\Omega=\left\{x \in \mathbb{R}^{2}:1 < \| x \|_2 < 2\right\}$,
where $ \| \cdot \|_2$ is the Euclidean norm and consider the BVP
\begin{equation}
 \label{bvp-illustr}
 \left\{
\begin{array}{ll}
-\Delta u=\lambda f(x, u, u_\sigma), & \text{in}\ \Omega, \\
u(x)=\psi (x), & \text{if}\ \| x \|_2 \leq 1, \\
u(x)=\lambda u(\eta), & \text{if}\ \| x \|_2 =2,
\end{array}
\right.
\end{equation}
where $\eta \in \Omega$ is a given point in the interior of the annulus; this is illustrated in Figure \ref{fig-illustr}.
\begin{figure}[ht]
\centerline{\fbox{\epsfig{file=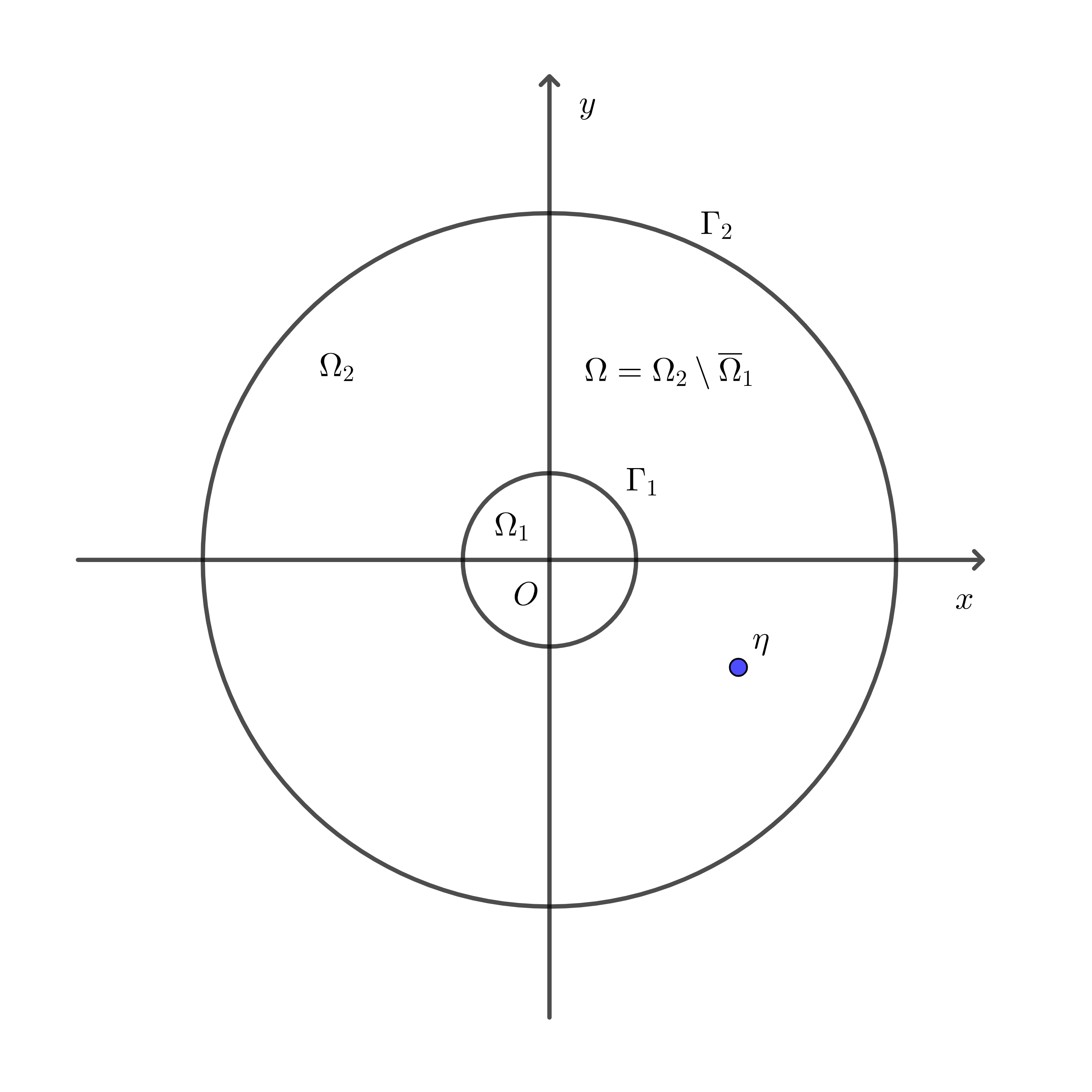, width = 6 cm} }}
\caption{}
\label{fig-illustr}
\end{figure}

The BVP \eqref{bvp-illustr} can be used as a model for the steady-states of the temperature of a heated annular plate.
In this model, the heat source can depend also on the ``global'' behaviour of the system, namely the heat source in a point $x$ of the annulus can also depend on the temperature measured at a point $\sigma(x)$ of the whole domain 
$\overline{\Omega}_2$ (and in particular it can take into account the values of the datum $\psi$).
Moreover, a controller located in the exterior border of the annulus adds or removes heat in manner proportional to the temperature registered by a sensor located at the point $\eta$ of the annulus.
For one-dimensional similar heat-flow problems see for example \cite{webb-therm} and references therein, for deviated arguments see \cite{ac-gi-at-tmna, acgi4, rub-rod-lms}, for the elliptic case see~\cite{sBaCgI, gi-tmna}.

Elliptic equations with non-local terms represent a widely investigated subject, also in view of applications.
For example, we mention equations of Kirchhoff type, see for example the review by Ma~\cite{ToFu} as well as 
the recent paper~\cite{gi-jepeq} and references therein. Concerning PDEs with deviated (or more general \emph{functional}) arguments, we refer for example to~\cite{ChenMa, gi-jepeq, Ros2019, Sim2018, Sk-book, Sk2016}.
On the other hand, BVPs associated with ODEs with deviated arguments have been studied by many authors with different techniques: we refer for example to~\cite{ac-gi-at-tmna} and references therein.

Parametric BVPs similar to \eqref{bvp-intro} have also been studied in association to elliptic functional differential
equations subject to functional BCs (see for example~\cite{sBaCgI,gi-jepeq}).
Note that in a local framework 
--  that corresponds to the choice $\sigma(x)=x$ in \eqref{eq-intro} --
and in the case of the Laplacian
under Dirichlet boundary conditions,
one gets the classical Gelfand-type problem: see 
the Introduction of \cite{Bonanno} for a recent review on this topic. In the context of PDEs, there is interest to the problem of parameter-dependent BCs, see for example a ``nonlinear spectral problem'' considered in Section 4.7 of the classical book by Pao~\cite{Pao}.

We wish to point out that that the functional boundary operator $B$ in \eqref{bvp-intro}
is fairly general and can be used to deal with nonlinear and nonlocal BCs; the latter topic is quite fashionable both in the context of ODEs and of PDEs, for example see 
 the reviews~\cite{Cabada1, Conti, rma, sotiris, Stik, Whyburn} and the papers~\cite{Goodrich3, Goodrich4, kttmna, ktejde, Pao-Wang, Picone, jw-gi-jlms}.

Regarding the nature of the domain, that is the annular-like region, for the elliptic context we refer the reader to the classical work of Amann and  L\'opez-G\'omez~\cite{Amann-LG-JDE98} and the more recent paper of Kowalczyk and co-authors~\cite{KPV}.

Our main tool is Theorem~\ref{BK-transl-norm}, which is a version in affine cones of the celebrated Birkhoff--Kellogg theorem. The proof of Theorem~\ref{BK-transl-norm} is obtained by topological methods  in the recent paper \cite{acgi2}.
Applications of Theorem~\ref{BK-transl-norm} in the context of ODEs have been considered in~\cite{acgi3, acgi4, CIR24}.
We remark that the setting of affine cones seems to be helpful when dealing with differential equations with delay effects; 
as far as we are aware, this is the first time that this approach is ultilized in the context of PDEs.
Here we obtain the existence of nontrivial solutions $(u,\lambda)$ of the BVP~\eqref{bvp-intro}
such that $u$ belongs to
a suitable traslate of a cone  of nonnegative functions.

We close the paper with an example, set in $\mathbb{R}^{2}$, where the nonlinearity $f$ that occurs is not radially symmetric; nevertheless we exploit the symmetry of the domain and use an ODE technique in order to construct suitable lower bounds that are a key ingredient to apply our theoretical result.

\section{Some known facts and setting of the problem} \label{sec-set}

Let $\Omega \subset \mathbb{R}^{n}, (n\geq 2)$ be an annular-like domain,
that is, $\Omega = \Omega_2 \setminus \overline\Omega_1$,
where $\Omega_1$, $\Omega_2$ are bounded domains with smooth boundary such that $\overline \Omega_1$
is strictly contained into $\Omega_2$.
Denote by $\Gamma_1 = \partial \Omega_1$ and by
$\Gamma_2 = \partial \Omega_2$; then, 
$\Gamma_1$ and $\Gamma_2$ are smooth compact manifolds without boundary (cf. \cite[Sect. 4]{Amann-rev}, see also \cite[Chap. 2]{PW}).
Given $\alpha\in (0,1)$ we denote by $C^{\alpha}(\overline{\Omega})$ the space of all $\alpha$-H\"{o}lder continuous functions $g:\overline{\Omega}\to \mathbb{R}$ and, for every $k\in \mathbb{N}$, we denote by $C^{k+\alpha}(\overline{\Omega})$ the space of all functions $g\in C^{k}(\overline{\Omega})$ such that all the partial derivatives of $g$ of order $k$ are $\alpha$-H\"{o}lder continuous in $\overline{\Omega}$ (for more details see \cite[Examples~1.13 and~1.14]{Amann-rev}).

We study the parametric, functional BVP
\begin{equation}
 \label{bvp}
 \left\{
\begin{array}{lc}
L u=\lambda f(x, u, u_\sigma), & \text{in}\ \Omega, \\
u(x)=\psi (x), & \text{on}\ \overline \Omega_1, \\
u(x)=\lambda \zeta (x) B[u], & \text{on}\ \Gamma_2,
\end{array}
\right.
\end{equation}
where $L$ is a strongly uniformly elliptic operator, namely
\begin{equation*}
L u(x)=-\sum_{i,j=1}^n a_{ij}(x)\frac{\partial^2 u}{\partial x_i \partial x_j}(x)+\sum_{i=1}^n a_{i}(x) \frac{\partial u}{\partial x_i} (x)+a(x)u(x), \quad \mbox{for $x\in \Omega$}
\end{equation*}
with coefficients $a_{ij},a_{i},a\in C^{\alpha}(\overline{\Omega})$ for $i,j=1,2,\ldots,m$, $a(x)\ge 0$ on $\overline{\Omega}$, $a_{ij}(x)=a_{ji}(x)$ on $\overline{\Omega}$ for $i,j=1,2,\ldots,m$; moreover there exists $\mu_{0}>0$ such that 
$$\sum_{i,j=1}^n a_{ij}(x)\xi_i \xi_j\ge \mu_{0} \|\xi\|_n^2 \quad \mbox{for $x\in \Omega$ and $\xi=(\xi_1,\xi_2,\ldots,\xi_n)\in\R^n$},$$
where $ \| \cdot \|_n$ is the Euclidean norm.

Concerning the other terms appearing in~\eqref{bvp}, $\lambda>0$ is a real parameter,
$f$ is a real-valued continuous function defined on $\overline \Omega\times\R \times\R$,
$\psi:  \overline \Omega_1 \to \R$ is continuous,
$\zeta:  \Gamma_2 \to \R_+$ is continuous, moreover
$B$ is a suitable functional defined on the space $C(\overline{\Omega}_2)$,
endowed with the standard supremum norm  $ \| \cdot \|_{\infty} $.

Finally, given a continuous map
$\sigma: \overline{\Omega} \to \overline{\Omega}_2$ and $u\in C(\overline{\Omega}_2)$,
 with the notation $u_\sigma$ we mean the composition
\[
u_\sigma: \overline{\Omega} \to \R \quad u_\sigma(x)=u(\sigma(x));
\]
hence $u_\sigma \in C(\overline{\Omega})$ is a map which may take into account the global behaviour of $u$.

We apply the classical elliptic theory to the following auxiliary Dirichlet problems.
In fact, it is known that, under the previous conditions, a strong maximum principle holds
(see~\cite{Amann-rev}, Section 4 of Chapter 1;  see also Chapter 2 of~\cite{PW}).
Furthermore, given $\mathfrak f\in C(\overline{\Omega})$, the homogeneous Dirichlet problem on $\Omega$
 \begin{equation}
 \label{eqelliptic.bc.hom}
 \left\{
\begin{array}{ll}
  Lu(x)=\mathfrak f(x), & x\in \Omega, \\
  u(x)=0, & x\in  \Gamma_1 \cup \Gamma_2,
\end{array}
\right.
\end{equation}
admits a unique classical solution $u_{\mathfrak f}\in C^{1,\alpha}(\overline{\Omega})$.
Moreover, the BVP
 \begin{equation}
 \label{eqelliptic.bc.in}
 \left\{
\begin{array}{ll}
  Lu(x)=0, & x\in \Omega, \\
  u(x)=\psi(x), & x\in \Gamma_1, \\
u(x)=0, & x\in \Gamma_2,
\end{array}
\right.
\end{equation}
admits a unique solution $\delta \in C^{1,\alpha}(\overline{\Omega})$, and the BVP
 \begin{equation}
 \label{eqelliptic.bc.out}
 \left\{
\begin{array}{ll}
  Lu(x)=0, & x\in \Omega, \\
  u(x)=0, & x\in \Gamma_1, \\
u(x)=\zeta(x), & x\in \Gamma_2,
\end{array}
\right.
\end{equation}
also admits a unique solution $\gamma \in C^{1,\alpha}(\overline{\Omega})$ such that $\gamma(x)\geq 0$ for every $x\in\Omega$.

The solution operator associated to the BVP~\eqref{eqelliptic.bc.hom}, $\mathcal G :C(\overline{\Omega})\to C^{1}(\overline{\Omega})$, defined as $\mathcal G (\mathfrak f)=u_{\mathfrak f}$ is linear and continuous.
Moreover, we observe that we can uniquely extend $\mathcal G$ to another operator,
denoted again by the same name,
$\mathcal G :C(\overline{\Omega})\to C(\overline{\Omega}_2)$, by considering a trivial continuous extension of a
function $v \in C(\overline{\Omega})$ such that $v|_{\Gamma_1}\equiv 0$, that is
 \begin{equation*}
\tilde v(x) = \left\{
\begin{array}{ll}
  \tilde v(x)= v(x), & x\in \overline \Omega, \\
  \tilde v(x)=0, & x\in  \Omega_1.
\end{array}
\right.
\end{equation*}
For example notice that $\tilde \gamma \in C(\overline{\Omega}_2)$, where $\gamma$ is the unique solution of the BVP \eqref{eqelliptic.bc.out}.
The operator $\mathcal G$ is continuous, linear and compact operator and leaves the cone of positive functions invariant (see, for example~\cite{Amann-rev}, Section 4 of Chapter 1).

Some further notation is in order. 
We define
\begin{equation} \label{def-psihat}
\phi(x) :=
\begin{cases}
\psi(x),\ & x\in \overline \Omega_1, \\
\delta(x),\ & x\in \overline{\Omega}= \overline \Omega_2 \setminus  \Omega_1,
\end{cases}
\end{equation}
where $\delta$ is the unique solution of the BVP \eqref{eqelliptic.bc.in}. Observe that, by construction, the function $\phi \in C(\overline{\Omega}_2)$.  The function $\phi$ will play a key role in the sequel, since it will be the vertex of the affine cone that we use when applying Theorem~\ref{BK-transl-norm}.

We denote by $\mathcal{F}$
 the \emph{superposition (Nemytskii)} operator
 associated with maps $f$ and $\sigma$ as above; that is,
\[
\mathcal{F}:C(\overline{\Omega}_2)\to C(\overline{\Omega}),\quad
\mathcal{F}(u)(x) := f(x,u(x),u(\sigma(x))),\ x\in\overline{\Omega}.
\]
Note, in particular, that the operator $\mathcal G \circ \mathcal F$ is well-defined.

The above discussion justifies the following definition of \emph{solution} of the problem \eqref{bvp}.
\begin{defn} \label{def-sol-bvp}
We say that a function $u\in C(\overline{\Omega}_2)$ is a {\it (weak) solution}
of the problem \eqref{bvp}~if 
\[
u=\phi  + \lambda \Bigl( \mathcal G \circ \mathcal F (u)  +  \tilde \gamma B[u] \Bigr).
\]
\end{defn}

In other words, $u$ is a solution
of the problem \eqref{bvp} if and only if it is a fixed point of the operator $\phi  +  \lambda \mathcal{T}$, where 
\[
\mathcal{T}:C(\overline{\Omega}_2)\to C(\overline{\Omega}_2),\qquad
\mathcal{T}(u) := \mathcal G \circ \mathcal F (u)  +  \tilde \gamma B[u].
\]

Let now $(X,\| \, \|)$ be a real Banach space. A \emph{cone} $K$ of $X$  is a closed set with $K+K\subset K$, $\mu K\subset K$ for all $\mu\ge 0$ and $K\cap(-K)=\{0\}$.
For $y\in X$, the \emph{translate} of the cone $K$ is defined as
$$
K_y:=y+K=\{y+x: x\in K\}.
$$
Given a bounded and open (in the relative
topology) subset $\mathcal O$  of $K_y$, we denote by $\overline{\mathcal O}$ and $\partial \mathcal O$
the closure and the boundary of $\mathcal O$ relative to $K_y$.
Given an open
bounded subset $D$ of $X$ we denote $D_{K_y}=D \cap K_y$, an open subset of
$K_y$.

With these ingredients we can now state a Birkhoff–Kellogg type result in affine cones.

\begin{thm}[\cite{acgi2}, Corollary 2.4] \label{BK-transl-norm}
Let $(X,\| \, \|)$ be a real Banach space, $K\subset X$ be a cone and
 $D\subset X$ be an open bounded set with $y \in D_{K_y}$ and
$\overline{D}_{K_y}\ne K_y$. Assume that $\mathcal{F}:\overline{D}_{K_y}\to K$ is
a compact map and assume that 
$$
\inf_{x\in \partial D_{K_y}}\|\mathcal{F} (x)\|>0.
$$
Then there exist $x^*\in \partial D_{K_y}$ and $\lambda^*\in (0,+\infty)$ such that $x^*= y+\lambda^* \mathcal{F} (x^*)$.
\end{thm}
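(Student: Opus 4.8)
The plan is to recast the conclusion as a fixed point problem on the affine cone $K_y$ and to detect the solution by means of the fixed point index. For $\lambda\ge 0$ put $\Phi_\lambda(x):=y+\lambda\,\mathcal F(x)$. Since $\mathcal F$ takes values in $K$ and $K_y=y+K$, each $\Phi_\lambda$ maps $\overline D_{K_y}$ into $K_y$, and it is compact because $\mathcal F$ is. The assertion that there exist $x^*\in\partial D_{K_y}$ and $\lambda^*\in(0,+\infty)$ with $x^*=y+\lambda^*\mathcal F(x^*)$ is precisely the statement that, for some $\lambda^*>0$, the map $\Phi_{\lambda^*}$ has a fixed point lying on $\partial D_{K_y}$.

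First I would argue by contradiction, assuming that for every $\lambda>0$ and every $x\in\partial D_{K_y}$ one has $x\ne y+\lambda\mathcal F(x)$; since $y\in D_{K_y}$ is an interior point (relative to $K_y$), also $x\ne y=\Phi_0(x)$ on $\partial D_{K_y}$. Hence $\Phi_\lambda$ is fixed-point-free on $\partial D_{K_y}$ for every $\lambda\ge 0$, so the index $i(\Phi_\lambda,D_{K_y})$ in $K_y$ is well defined for all such $\lambda$. Because $\mathcal F(\overline D_{K_y})$ is relatively compact and $\lambda$ ranges over a compact interval, the family $\{\Phi_\lambda\}$ is an admissible compact homotopy, so homotopy invariance together with the normalization property (the constant map $\Phi_0\equiv y$ has index $1$, as $y$ is interior) yields
\[
 i(\Phi_\lambda,D_{K_y})=i(\Phi_0,D_{K_y})=1,\qquad \lambda\ge 0.
\]

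The core of the argument --- and the step I expect to be the main obstacle --- is then to show that this index must in fact vanish for $\lambda$ large, contradicting the displayed identity. The natural device is a push-out lemma in the affine cone: fixing $e_0\in K\setminus\{0\}$ (which exists because $\overline D_{K_y}\ne K_y$ forces $K\ne\{0\}$), one homotopes $\Phi_\lambda$ through the maps $x\mapsto y+\lambda\mathcal F(x)+\mu e_0$, $\mu\ge 0$, to a map with no fixed point, whence $i(\Phi_\lambda,D_{K_y})=0$. For this to be admissible one needs
\[
 x-y\ne \lambda\,\mathcal F(x)+\mu e_0 \quad\text{for all } x\in\partial D_{K_y},\ \mu\ge 0,
\]
and here the hypotheses must be used decisively. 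Indeed $\|x-y\|$ is bounded by $R:=\sup_{\overline D_{K_y}}\|x-y\|<\infty$, while any putative solution gives $\lambda\mathcal F(x)\le_K x-y$, which together with $\inf_{\partial D_{K_y}}\|\mathcal F\|=:m>0$ forces, through the order structure of the cone, a bound of the form $\lambda m\le N R$ with $N$ a normality constant; thus for $\lambda$ beyond this threshold no such $x,\mu$ exist and the index is $0$. The delicate point is exactly this norm control along cone directions: since $\mathcal F$ is only bounded below on $\partial D_{K_y}$ and not in the interior, one cannot simply claim that $\Phi_\lambda$ is fixed-point-free for large $\lambda$, and one is obliged to route the argument through the shift by $e_0$ and the cone ordering (in the intended application $X=C(\overline{\Omega}_2)$ with the cone of nonnegative functions, whose normality constant equals $1$, this estimate is immediate).

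Combining the two index computations gives $1=i(\Phi_\lambda,D_{K_y})=0$ for $\lambda$ large, the desired contradiction; hence the contradiction hypothesis fails and there exist $x^*\in\partial D_{K_y}$ and $\lambda^*\in(0,+\infty)$ with $x^*=y+\lambda^*\mathcal F(x^*)$.
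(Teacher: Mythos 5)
You should first note that this paper does not actually prove Theorem~\ref{BK-transl-norm}: the result is imported verbatim from \cite{acgi2}, and the paper only records that the proof there is obtained ``from the classical fixed point index theory for compact maps \dots in the context of affine cones'' (see also \cite{DM2014}). Your strategy --- argue by contradiction, observe that each $\Phi_\lambda=y+\lambda\mathcal F$ is a compact self-map of the affine cone, get $i(\Phi_\lambda,D_{K_y})=1$ for all $\lambda\ge 0$ by homotopy to the constant map $y$, and then force $i(\Phi_\lambda,D_{K_y})=0$ for large $\lambda$ by a push-out along a direction $e_0\in K\setminus\{0\}$ --- is exactly the route the paper attributes to \cite{acgi2}. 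Several of your steps are correct and well motivated: the index on the closed convex set $K_y$ is available, the normalization/homotopy argument for index $1$ uses the contradiction hypothesis correctly, the existence of $e_0$ from $\overline D_{K_y}\ne K_y$ is right, and the fact that $y+\lambda\mathcal F+\mu e_0$ is fixed-point free on all of $\overline D_{K_y}$ for $\mu$ large needs only the triangle inequality (since $\mathcal F$ is bounded on $\overline D_{K_y}$ and $D$ is bounded).

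The genuine gap is in the admissibility of the push-out homotopy for \emph{intermediate} $\mu$. To exclude solutions of $x=y+\lambda\mathcal F(x)+\mu e_0$ on $\partial D_{K_y}$ you pass from the cone inequality $\lambda\mathcal F(x)\leq_K x-y$ to the norm inequality $\lambda\|\mathcal F(x)\|\le N\|x-y\|$, ``with $N$ a normality constant.'' But the theorem is stated for an \emph{arbitrary} cone $K$ in a real Banach space: normality is not among the hypotheses, and the implication $0\leq_K u\leq_K v\Rightarrow\|u\|\le N\|v\|$ is precisely the definition of normality, which genuinely fails for non-normal cones (for instance the cone of nonnegative functions in $C^1[0,1]$ with the $C^1$ norm). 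Without it, a boundary solution with some $\mu>0$ cannot be excluded by any bound of the form $\lambda m\le NR$, the homotopy may fail to be admissible, and the index-zero computation collapses. So, as written, your argument proves the statement only for normal cones --- a strictly weaker theorem than the one asserted --- and your parenthetical remark that the cone arising in this paper's application has normality constant $1$ justifies the application, not the theorem. Closing the gap requires replacing your order-theoretic estimate by the index-zero machinery on translates of cones that \cite{acgi2} and \cite{DM2014} develop for exactly this step, rather than assuming normality tacitly.
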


The proof of Theorem~\ref{BK-transl-norm} is derived from the
classical fixed point index theory for compact maps;
this is done in the context of affine cones, see also \cite{DM2014}.
 We refer a reader interested in the fixed point index to the review of Amann \cite{Amann-rev} and to the book by  Guo and Lakshmikantham \cite{guolak}.

By $K_0$ we denote the following cone of non-negative functions
in $C(\overline{\Omega}_2)$:
$$
K_0=\{u\in C(\overline{\Omega}_2): u(x)\geq 0\ \text{for every}\ x\in\overline{\Omega}_2\ \text{and}\ u(x)= 0 \ \text{for every}\ x\in \overline \Omega_1\}.
$$
Note that $K_0 \neq \{0\}$ since the map $\tilde \gamma$ introduced above is in $K_0$.

Given $\omega \in  C(\overline{\Omega}_2)$, let $K_\omega$ be the following translate of the cone $K_0$,
$$K_\omega=\omega + K_0 = \{\omega +u : u \in K_0\}.$$
\begin{defn}
Given $\omega \in  C(\overline{\Omega}_2)$ and $\rho>0$, we define the following subsets of $C(\overline{\Omega}_2)$:
$$K_{0,\rho}:=\{u\in K_0: \|u\|_{\infty} <\rho\},\quad 
K_{\omega,\rho}:= \omega + K_{0,\rho}.$$
\end{defn}

\section{Main result}

The following theorem provides an existence result for the problem~\eqref{bvp}: here we obtain a non-trivial solution within the cone $K_{\phi}$, where $\phi$ is as in \eqref{def-psihat}.

\begin{thm}\label{eigen}Let $\rho \in (0,+\infty)$ and assume the following conditions hold. 

\begin{itemize}

\item[$(a)$] 
There exist $\underline{\ell}_{\rho} \in C(\overline{\Omega})$ such that
\begin{equation*}
f(x,u,v)\ge \underline{\ell}_{\rho}(x) \ge 0,\ \text{for every}\ (x,u,v)\in \overline \Omega\times\R_+\times\R\
\text{with}\ \max\{u,|v|\}\leq 
 \rho+\|\phi\|_{\infty}.
\end{equation*}

\item[$(b)$] 
$B: \overline K_{\phi,\rho} \to \R_{+}$ is continuous and bounded, in particular let $\underline{b}_{\rho}\in [0,+\infty)$ be such that 
\begin{equation*}
B[u]\geq \underline{b}_{\rho},\ \text{for every}\ u\in \partial K_{\phi,\rho}.
\end{equation*}

\item[$(c)$] 
There exists $d_{\rho} \in (0,+\infty)$ such that
\begin{equation}\label{c-cond}
\sup_{x\in \overline{\Omega}} | \mathcal G(\underline{\ell}_{\rho})(x) + \underline{b}_{\rho} \gamma (x)|
\geq d_{\rho}.
\end{equation}
\end{itemize}

Then there exist $\lambda_\rho\in (0,+\infty)$ and $u_{\rho}\in \partial K_{\phi,\rho}$ that satisfy the problem \eqref{bvp}.
\end{thm}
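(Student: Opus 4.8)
The plan is to apply Theorem~\ref{BK-transl-norm} in the Banach space $X=C(\overline{\Omega}_2)$ with the cone $K=K_0$, the vertex $y=\phi$, and the abstract compact map played by the operator $\mathcal{T}$. Since a relation $u=\phi+\lambda\,\mathcal{T}(u)$ is precisely a solution of \eqref{bvp} in the sense of Definition~\ref{def-sol-bvp}, it suffices to produce such a pair on $\partial K_{\phi,\rho}$. To this end I would take $D$ to be the open ball $\{u\in C(\overline{\Omega}_2):\|u-\phi\|_{\infty}<\rho\}$, so that $D_{K_\phi}=K_{\phi,\rho}$, its relative boundary is $\partial K_{\phi,\rho}=\{u\in K_\phi:\|u-\phi\|_{\infty}=\rho\}$, the vertex $\phi=\phi+0$ lies in $D_{K_\phi}$, and $\overline{D}_{K_\phi}=\overline{K_{\phi,\rho}}\neq K_\phi$ because $\rho<+\infty$.

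First I would check that $\mathcal{T}$ restricts to a compact map $\overline{K_{\phi,\rho}}\to K_0$. Compactness is inherited from the two summands: $\mathcal{G}$ is linear and compact, while the Nemytskii operator $\mathcal{F}$ is continuous and maps the bounded set $\overline{K_{\phi,\rho}}$ into a bounded subset of $C(\overline{\Omega})$ (as $f$ is continuous and the arguments $u(x)$, $u(\sigma(x))$ stay bounded by $\rho+\|\phi\|_{\infty}$), so $\mathcal{G}\circ\mathcal{F}$ is compact; the term $u\mapsto\tilde\gamma\,B[u]$ is continuous by condition $(b)$ and takes values in a bounded segment of the line spanned by $\tilde\gamma$, hence is compact. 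To see that $\mathcal{T}$ lands in $K_0$, note that $\tilde\gamma\in K_0$ and $B[u]\geq 0$ give $\tilde\gamma\,B[u]\in K_0$, while for the elliptic term I would use that for $u\in\overline{K_{\phi,\rho}}$ one has $\max\{u(x),|u(\sigma(x))|\}\leq\rho+\|\phi\|_{\infty}$ together with $u(x)=\phi(x)+v(x)\geq 0$ on $\overline\Omega$ (elements of $K_\phi$ dominate $\phi=\delta\geq 0$ there), so condition $(a)$ yields $\mathcal{F}(u)\geq\underline{\ell}_{\rho}\geq 0$; the positivity of $\mathcal{G}$ and the trivial extension then place $\mathcal{G}\circ\mathcal{F}(u)$ in $K_0$.

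The core of the argument is the Birkhoff--Kellogg non-degeneracy estimate $\inf_{u\in\partial K_{\phi,\rho}}\|\mathcal{T}(u)\|_{\infty}>0$. Here I would combine the pointwise lower bounds: since $\mathcal{G}$ is linear and leaves the positive cone invariant it is order-preserving, so condition $(a)$ gives $\mathcal{G}\circ\mathcal{F}(u)\geq\mathcal{G}(\underline{\ell}_{\rho})$, and condition $(b)$ with $\tilde\gamma\geq 0$ gives $\tilde\gamma\,B[u]\geq\underline{b}_{\rho}\,\tilde\gamma$, whence on $\partial K_{\phi,\rho}$
$$\mathcal{T}(u)\geq\mathcal{G}(\underline{\ell}_{\rho})+\underline{b}_{\rho}\,\tilde\gamma\geq 0.$$
As all quantities are non-negative and $\tilde\gamma=\gamma$ on $\overline\Omega$, taking the supremum over $\overline\Omega$ and invoking condition $(c)$ yields $\|\mathcal{T}(u)\|_{\infty}\geq\sup_{x\in\overline\Omega}|\mathcal{G}(\underline{\ell}_{\rho})(x)+\underline{b}_{\rho}\,\gamma(x)|\geq d_{\rho}$, a bound independent of $u$; hence the infimum is at least $d_{\rho}>0$.

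With all hypotheses of Theorem~\ref{BK-transl-norm} verified, it produces $u_{\rho}\in\partial K_{\phi,\rho}$ and $\lambda_{\rho}\in(0,+\infty)$ with $u_{\rho}=\phi+\lambda_{\rho}\,\mathcal{T}(u_{\rho})$, which is the desired solution. I expect the main obstacle to be the verification that $\mathcal{T}$ maps into the cone $K_0$ rather than merely into $C(\overline{\Omega}_2)$: this hinges on the sign of the first argument of $f$, i.e. on $u\geq 0$ on $\overline\Omega$, and hence on the positivity of $\phi=\delta$ there, which in turn requires care about the datum $\psi$ (its non-negativity) so that condition $(a)$ may be legitimately applied. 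The compactness and the final estimate are comparatively routine once this cone-invariance is settled.
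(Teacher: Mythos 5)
Your proposal is correct and follows essentially the same route as the paper's proof: the same affine-cone setup ($X=C(\overline{\Omega}_2)$, $K=K_0$, vertex $\phi$, with $D$ the ball of radius $\rho$ about $\phi$), the same compactness and cone-invariance verification for $\mathcal{T}$, the same pointwise lower bound $\mathcal{T}(u)\ge \mathcal{G}(\underline{\ell}_{\rho})+\underline{b}_{\rho}\,\tilde\gamma$ giving $\inf_{u\in\partial K_{\phi,\rho}}\|\mathcal{T}(u)\|_{\infty}\ge d_{\rho}>0$, and the same invocation of Theorem~\ref{BK-transl-norm}. The caveat you flag at the end --- that applying condition $(a)$ needs $u\ge 0$ on $\overline{\Omega}$, hence $\delta\ge 0$, hence in effect the non-negativity of $\psi$ --- is a genuine subtlety, but it is equally implicit in the paper's proof, which asserts $\mathcal{G}\circ\mathcal{F}(u)\in K_0$ ``by $(a)$ and the maximum principle'' without further comment.
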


\begin{proof}
First notice that,
due to the assumptions above, the operator $\mathcal T$ maps $\overline{K}_{\phi,\rho}$ into $K_0$ and is compact. 
In fact, since $\mathcal F$ is continuous and bounded and $\mathcal G$ is linear and compact, it follows that 
the operator $\mathcal G \circ \mathcal F$ is compact as well;
moreover, given $u \in \overline{K}_{\phi,\rho}$, we have $\mathcal G \circ \mathcal F (u)\in K_0$ by \emph{(a)} and the maximum principle.
On the other hand, assumption \emph{(b)} implies that $B$ is a continuous, finite rank operator with nonnegative values and, as we already stressed, $\tilde \gamma\in K_0$.

Take $u\in \partial K_{\phi,\rho}$, then for every $x\in \overline{\Omega}$
we have
\begin{equation*} 
\mathcal G \circ \mathcal F(u)(x)+ \tilde \gamma(x) B[u] 
\ge  \mathcal G(\underline{\ell}_{\rho})(x) + \underline{b}_{\rho}  \gamma (x) .
\end{equation*}
Thus by \emph{(c)} we obtain
\[
\|\mathcal G \circ \mathcal F (u)  +  \tilde \gamma B[u]\|_{\infty}\geq
\sup_{x\in \overline{\Omega}} |\mathcal G \circ \mathcal F(u)(x)+ \tilde \gamma(x) B[u] |
\ge \sup_{x\in \overline{\Omega}} | \mathcal G(\underline{\ell}_{\rho})(x) + \underline{b}_{\rho} \gamma (x)|
\geq d_{\rho}.
\]
In particular, note that the RHS of the latter inequality does not depend on the particular $u$ chosen.
Therefore we have
$$
\inf_{u\in \partial K_{\phi,\rho}}\| \mathcal G \circ \mathcal F (u)  +  \tilde \gamma B[u]\|_{\infty}\geq d_{\rho}>0,
$$
and the result follows by Theorem~\ref{BK-transl-norm}.
\end{proof}

We illustrate now in an example the applicability of Theorem \ref{eigen}.

\begin{ex} \label{example1}
 Let us consider the following BVP in $\Omega \subset \mathbb{R}^{2}$,
 \begin{equation} \label{eq.example1}
  \begin{cases}
   -\Delta u(x_1,x_2) = \lambda (1+x_1^2) \, e^{-u(x_1,x_2)-u(\frac{x_1}2,\frac{x_2}2)}, & (x_1,x_2)\in \Omega, \\
   u(x_1,x_2) = x_1^2+x_2^2, & (x_1,x_2) \in \overline \Omega_1, \\
   u(x_1,x_2) = \lambda \int_{\Omega_2} u^2\,d x, & (x_1,x_2) \in \Gamma_2, \\
  \end{cases}
 \end{equation}
 where
 $$\Omega= \left\{(x_1,x_2): 1\leq\sqrt{x_1^2+x_2^2}\leq e\right\}. $$ 
 Note that in this case the function that occurs in the PDE is not radially symmetric. Nevertheless, in our case, it is convenient to take as a lower bound the radially symmetric function
 $$\underline{\ell}_{\rho}(x)=e^{-2(\rho+1)},$$
 and solve the ‘‘torsion problem''
  \begin{equation} \label{eq.example1r}
   \begin{cases}
   -\Delta u(x_1,x_2) = 1, & (x_1,x_2)\in \Omega, \\
   u(x_1,x_2) = 0, & (x_1,x_2) \in \Gamma_1 \cup \Gamma_2. \\
  \end{cases}
 \end{equation}
The solution of the BVP~\eqref{eq.example1r} is obtained by transforming the BVP~\eqref{eq.example1r} into a corresponding ODE problem.
Indeed, let $u(x_1,x_2):=\varphi(r)$ be a radial solution of~\eqref{eq.example1r},
with $r:=\sqrt{x_1^2+x_2^2}$.
Then, $\varphi$ solves
  \begin{equation*} 
   \begin{cases}
   -\varphi''(r)-\dfrac1r \varphi'(r) = 1, & r\in (1,e), \\
   \varphi(1) = 0, \ \varphi(e) = 0, \\
  \end{cases}
 \end{equation*}
which yields
 \begin{equation*}
 u_0(x_1,x_2)=\frac{1}{8}[\left({ e}^{2}-1\right) \ln\! \left(x_1^2+x_2^2\right)+2(1- x_1^2-x_2^2)].
 \end{equation*}
 The same ODE argument can be used to solve the BVP
 \begin{equation*}
  \begin{cases}
   -\Delta u(x_1,x_2) = 0, & (x_1,x_2)\in \Omega, \\
   u(x_1,x_2) = 0, & (x_1,x_2) \in \Gamma_1, \\
   u(x_1,x_2) = 1, & (x_1,x_2) \in \Gamma_2, \\
  \end{cases}
 \end{equation*}
which yields
 \begin{equation*} 
\gamma(x_1,x_2)=\frac{\ln\! \left(x_1^2+x_2^2\right)}{2}
 \end{equation*}
and the BVP
 \begin{equation*}
  \begin{cases}
   -\Delta u(x_1,x_2) = 0, & (x_1,x_2)\in \Omega, \\
   u(x_1,x_2) = 1, & (x_1,x_2) \in \Gamma_1, \\
   u(x_1,x_2) = 0, & (x_1,x_2) \in \Gamma_2, \\
  \end{cases}
 \end{equation*}
which gives
 \begin{equation*} 
\delta(x_1,x_2)=1-\frac{\ln\! \left(x_1^2+x_2^2\right)}{2}
 \end{equation*}
so that
 \begin{equation*} 
\phi(x_1,x_2)=
 \begin{cases}
    x_1^2+x_2^2, & (x_1,x_2) \in \overline \Omega_1, \\
   1-\dfrac{\ln\! \left(x_1^2+x_2^2\right)}{2}, & (x_1,x_2) \in \overline \Omega_2 \setminus  \overline \Omega_1. \\
  \end{cases}
 \end{equation*}

A direct  computation yields
 \begin{equation}\label{sup}
\sup_{(x_1,x_2)\in \Omega} |u_0(x_1,x_2)|=\frac{1}{8}\left[\left({ e}^{2}-1\right) \ln\! \left(\frac{{ e}^{2}-1}{2}\right)+3- e^{2}\right].
 \end{equation}
 From \eqref{sup} and the choice of  $\underline{b}_{\rho}=0$ gives
\begin{align*}
&\sup_{(x_1,x_2)\in \Omega} | \mathcal G(\underline{\ell}_{\rho})(x_1,x_2) + \underline{b}_{\rho} \gamma(x_1,x_2)|
\geq e^{-2(\rho+1)}\, \cdot \,\sup_{(x_1,x_2)\in \Omega} |u_0(x_1,x_2)| =\\
&=e^{-2(\rho+1)}\frac{1}{8}\left[\left({ e}^{2}-1\right) \ln\! \left(\frac{{ e}^{2}-1}{2}\right)+3- e^{2}\right]=:
d_{\rho}>0,
\end{align*}
which implies that \eqref{c-cond} is satisfied for every $\rho \in (0,+\infty)$.

Thus we can apply Theorem~\ref{eigen}, obtaining uncountably many pairs of solutions and parameters $(u_{\rho}, \lambda_{\rho})$ for the BVP~\eqref{eq.example1}.

\end{ex}

\section*{Acknowledgements}
The authors are members of the ``Gruppo Nazionale per l'Analisi Matematica, la Probabilit\`a e le loro Applicazioni'' (GNAMPA) of the Istituto Nazionale di Alta Matematica (INdAM).
G.~Infante is a member of the UMI Group TAA  ``Approximation Theory and Applications''.  
The authors were partly funded by the Research project of MUR - Prin 2022 “Nonlinear differential problems with applications to real phenomena” (Grant Number: 2022ZXZTN2).

\end{document}